\newenvironment{proof}{\noindent\textbf{Proof\ }}{\hspace*{\fill}$\Box$\medskip}
\newtheorem{theorem}{Theorem}
\newtheorem{lemma}{Lemma}
\newtheorem{corollary}{Corollary}
\begin{document}

\title{Szemer\'edi-Trotter type theorem and sum-product estimate in finite
fields}\author{Le Anh Vinh\\
Mathematics Department\\
Harvard University\\
Cambridge, MA 02138, US\\
vinh@math.harvard.edu}\maketitle

\begin{abstract}
  We study a Szemer\'edi-Trotter type theorem in finite fields. We then use
  this theorem to obtain an improved sum-product estimate in finite fields.
\end{abstract}

\section{Introduction}

Let $A$ be a non-empty subset of a finite field $F_q$. We consider the sum set
\[ A+A:=\{a+b : a, b \in A\}\]
and the product set
\[A.A:=\{a.b : a, b \in A\}.\]

Let $|A|$ denote the cardinality of $A$. Bourgain, Katz and Tao (\cite{bourgain-katz-tao}) showed, using an argument of Edgar and Miller \cite{edgar-miller}, that when $ 1 \ll |A| \ll q$ then
\[\max(|A+A|,|A.A|) \gg |A|;\]
this improves the easy bound $|A+A|, |A.A| \geqslant |A|$. The precise statement of the sum-product estimate is as follows.

\begin{theorem}(\cite{bourgain-katz-tao}) \label{t1}
Let $A$ be a subset of $F_q$ such that
\[ q^\delta < |A| < q^{1-\delta}\]
for some $\delta > 0$. Then one has a bound of the form
\[ \max (|A+A|, |A.A|) \geqslant c(\delta)|A|^{1+\epsilon}\]
for some $\epsilon = \epsilon(\delta) > 0$.
\end{theorem}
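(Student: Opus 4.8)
The plan is to argue by contradiction. Put $K := \max(|A+A|,|A.A|)/|A|$; since the assertion to be proved is exactly that $K \ge c(\delta)\,|A|^{\epsilon}$, suppose instead that $K$ is tiny, say $K \le |A|^{\eta}$ with $\eta = \eta(\delta)$ to be chosen minuscule. I would then show that this forces $A$ to be ``approximately a subfield'' of $F_q$, which is incompatible with $q^{\delta} < |A| < q^{1-\delta}$: when $q = p$ is prime --- the setting of the original argument --- $F_q$ has no nontrivial proper subfield, so $A$ would have to be comparable in size to all of $F_q$; for composite $q$ the lower bound $|A| > q^{\delta}$ is what prevents $A$ from degenerating into a small subfield, and one may have to phrase the conclusion as a subfield-versus-expansion dichotomy.

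First I would record the consequences of small doubling on each side separately. By the Pl\"unnecke--Ruzsa inequality in the additive group $F_q$, the bound $|A+A|\le K|A|$ gives $|mA - nA| \le K^{m+n}|A|$ for every fixed $m, n$, so all bounded iterated sum- and difference-sets of $A$ have size $\le K^{O(1)}|A|$; applying the exact analogue in the multiplicative group $F_q^{\times}$ to $|A.A|\le K|A|$ gives the corresponding bound for all bounded iterated product- and ratio-sets of $A\setminus\{0\}$. (No normalization is needed to put $0$ or $1$ into $A$: since $0 = a - a$ and $1 = a/a$ for any nonzero $a \in A$, the combined structure will see $0$ and $1$ automatically.)

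The heart of the proof --- and the step I expect to be the main obstacle --- is to combine the additive and multiplicative information into control of \emph{mixed} expressions: one must show that $A + A.A$, $A.A + A.A$, and more generally the set $R$ of all values of bounded-length $\{+,-,\cdot\}$-expressions in boundedly many elements of $A\cup A^{-1}$, still satisfy $|R| \le K^{O(1)}|A|$. This is the Edgar--Miller / Bourgain--Katz--Tao mechanism: because $A$ has small multiplicative doubling, for a positive proportion of $a \in A$ the dilate $aA$ sits efficiently inside $A.A$; because $A$ has small additive doubling, boundedly many such dilates can be added with no blow-up; and a Ruzsa covering argument lets one trade products for sums and conversely, so that $R$ becomes an \emph{approximate subfield} of $F_q$ --- closed under $+$, $-$, $\cdot$ and (on its nonzero part) inversion up to a factor $K^{O(1)}$ --- while remaining of size $K^{O(1)}|A|$. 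The delicate points are preventing the exponent hidden in $K^{O(1)}$ from exploding as the expressions lengthen, and making sure each covering step loses only a fixed power of $K$.

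Finally I would upgrade ``approximate subfield'' to ``genuine subfield'': a Bogolyubov/Freiman-type regularization produces a subfield $G$ of $F_q$ with $|A| \le K^{O(1)}|G|$ and $|G| \le K^{O(1)}|A|$ that covers $A$ by at most $K^{O(1)}$ of its cosets (or else yields $|A| \le K^{O(1)}$ directly, which already gives $K \ge |A|^{c}$). For $q$ prime this forces $G = F_q$, so $q \le K^{O(1)}|A|$; combined with $|A| < q^{1-\delta}$, i.e.\ $q^{\delta} > |A|^{\delta/(1-\delta)}$, we obtain $K \ge c(\delta)\,|A|^{\epsilon}$ with $\epsilon = \epsilon(\delta) > 0$, whence $\max(|A+A|,|A.A|) = K|A| \ge c(\delta)\,|A|^{1+\epsilon}$, as desired; for composite $q$ one rules out the proper-subfield alternative using $|A| > q^{\delta}$. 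I should emphasize that this argument is extremely lossy in $\epsilon$; extracting a quantitatively strong exponent is precisely the purpose of the Szemer\'edi--Trotter-type incidence estimate developed in the present paper.
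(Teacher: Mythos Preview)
This theorem is not proved in the present paper at all: it is quoted verbatim from \cite{bourgain-katz-tao} as background, and the paper then proceeds in the \emph{opposite} direction, first establishing the incidence bound (Theorem~\ref{m1}) by spectral graph theory and then deducing the quantitative sum-product estimate (Theorem~\ref{m2}) from it via Elekes' lemma. There is therefore no proof here against which to compare your proposal.

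For what it is worth, your outline is a faithful high-level sketch of the original Bourgain--Katz--Tao argument: assume $K$ small, apply Pl\"unnecke--Ruzsa separately in the additive and multiplicative groups, invoke the Edgar--Miller mechanism to control mixed polynomial expressions and produce an approximate subfield, then rigidify to a genuine subfield and derive a contradiction with the size window $q^{\delta}<|A|<q^{1-\delta}$. You are right to flag the mixed-expression step as the crux and to note the prime-versus-prime-power subtlety (as literally stated the result is for prime $q$; for general $q$ one must also exclude the case that $A$ is essentially contained in a proper subfield). As a plan this is sound; it is, of course, only a skeleton --- the approximate-subfield lemma is where all the real work lies --- and, as you observe at the end, it gives no usable value of $\epsilon$, which is precisely what the paper's incidence-based approach supplies.
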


Using Theorem \ref{t1}, Bourgain, Katz and Tao can prove a theorem of Szemer\'edi-Trotter type in two-dimensional finite field geometries. 
Roughly speaking, this theorem asserts that if we are in the finite plane $F_q^2$ and one has $N$ lines and $N$ points in that plane for some $1 \ll N \ll q^2$,
then there are at most $O(N^{3/2-\epsilon})$ incidences; this improves the standard bound of $O(N^{3/2})$ obtained from extremal graph theory. The precise statement of the theorem is as follows. 

\begin{theorem} (\cite{bourgain-katz-tao} \label{t2}
Let $P$ be a collection of points and $L$ be a collection of
lines in $F^2$. For any $0 < \alpha < 2$, if $|P|, |L| \leqslant N = q^{\alpha}$ then we have
\[ |\{(p, l) \in P \times L : p \in l\}| \leqslant C N^{3 / 2 - \varepsilon},
\]
for some $\varepsilon = \varepsilon (\alpha) > 0$ depending only on the
exponent $\alpha$. 
\end{theorem}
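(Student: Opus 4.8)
The plan is to argue by contradiction. Suppose $I:=|\{(p,\ell)\in P\times L:p\in\ell\}|>CN^{3/2-\varepsilon}$ for a small constant $\varepsilon=\varepsilon(\alpha)>0$ and a large constant $C$, both to be fixed at the end; the aim is to manufacture from the incidence data a set $A\subseteq F_q$ with $q^\delta<|A|<q^{1-\delta}$ for a suitable $\delta=\delta(\alpha)>0$ such that both $|A+A|$ and $|A.A|$ are at most $|A|^{1+O(\varepsilon)}$. Since $0<\alpha<2$ one may take $\delta:=\tfrac14\min(\alpha,2-\alpha)>0$, and then choosing $\varepsilon$ small enough that the implied exponent $O(\varepsilon)$ is less than the gain $\epsilon(\delta)$ supplied by Theorem \ref{t1} contradicts that theorem, forcing $I\le CN^{3/2-\varepsilon}$. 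The first ingredient is a routine regularisation. Since two distinct points lie on at most one common line one has $\sum_{\ell\in L}|\ell\cap P|^2\le N^2+I$ and, dually, $\sum_{p\in P}(\#\{\ell\in L:p\in\ell\})^2\le N^2+I$; as $I\le N^2$, the incidences coming from lines carrying more than $2N^{1/2+\varepsilon}$ points of $P$, or from points lying on more than $2N^{1/2+\varepsilon}$ lines of $L$, contribute at most $O(N^{3/2-\varepsilon})$ in total and may be discarded. A dyadic pigeonholing over the $O(\log q)=N^{o(1)}$ degree scales then yields $P_0\subseteq P$ and $L_0\subseteq L$ on which the incidence graph is biregular up to constants: each point of $P_0$ meets $\Theta(k)$ lines of $L_0$ and each line of $L_0$ meets $\Theta(m)$ points of $P_0$, with $k,m=N^{1/2+O(\varepsilon)}$ and $|P_0|,|L_0|=N^{1-O(\varepsilon)}$.

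Next I normalise to a grid. A second moment estimate, $\sum_{a,b\in P_0}\#\{p\in P_0:\overline{ap},\overline{bp}\in L_0\}=\sum_{p\in P_0}\big(\sum_{\ell\in L_0,\,p\in\ell}(|\ell\cap P_0|-1)\big)^2=\Theta(|P_0|(km)^2)$, together with a check that the diagonal $a=b$ contributes negligibly, produces two distinct points $a,b\in P_0$ with $\#\{p\in P_0:\overline{ap},\overline{bp}\in L_0\}\gtrsim (km)^2/|P_0|\gtrsim N^{1-O(\varepsilon)}$. Apply a projective transformation carrying the line $\overline{ab}$ to the line at infinity and $a,b$ to the two axial points at infinity; then the lines of $L_0$ through $a$ become the vertical lines $\{x=s:s\in X\}$ and those through $b$ the horizontal lines $\{y=t:t\in Y\}$, where $|X|,|Y|=\Theta(k)=N^{1/2+O(\varepsilon)}$, and the $\gtrsim N^{1-O(\varepsilon)}$ points found above now lie inside the grid $X\times Y$, occupying a proportion $N^{-O(\varepsilon)}$ of it; call this point set $P_1$. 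A further dyadic pigeonholing keeps $\gtrsim N^{1-O(\varepsilon)}$ of the lines of $L_0$, none vertical or horizontal, each meeting $P_1$ in $\gtrsim N^{1/2-O(\varepsilon)}$ points; writing such a line as $y=\lambda x+\mu$, this says the affine map $s\mapsto\lambda s+\mu$ sends a proportion $N^{-O(\varepsilon)}$ of $X$ into $Y$.

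The configuration now in hand --- the grid $X\times Y$, the pencils at $a$ and $b$, and many "diagonal" lines each meeting the grid in nearly a full row's worth of points --- is precisely the reverse of Elekes's sum--product example, in which the lines of slope $\lambda$ and intercept $b$ meet the grid $(A+A)\times(A.A)$ in $(b+A)\times(\lambda A)$; the task is to recognise it as such. Averaging over $P_1$ yields a point $c=(c_1,c_2)\in P_1$ lying on $\gtrsim N^{1/2-O(\varepsilon)}$ of the retained lines. Reading off the slope of such a line through the grid points $(s,t)\in P_1$ it carries, namely $(t-c_2)/(s-c_1)$, shows its slopes form $\gtrsim N^{1/2-O(\varepsilon)}$ distinct elements of the ratio set $(Y-c_2)/(X-c_1)$, and that each such slope $\lambda$ satisfies $|\lambda X'\cap Y'|\gtrsim|X'|N^{-O(\varepsilon)}$ where $X':=(X-c_1)\setminus\{0\}$ and $Y':=(Y-c_2)\setminus\{0\}$; summing $|\lambda X'\cap Y'|^2$ over these $\lambda$ shows the multiplicative energy of $X'$ (which, via a fixed dilate, we identify with that of $Y'$) is within a factor $N^{O(\varepsilon)}$ of its maximum, so the Balog--Szemer\'edi--Gowers theorem in the multiplicative group of $F_q$ gives $A_\times\subseteq X'$ with $|A_\times|\gtrsim|X'|N^{-O(\varepsilon)}$ and $|A_\times.A_\times|\le|A_\times|^{1+O(\varepsilon)}$. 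Dually --- either by pigeonholing the slopes of the $\gtrsim N^{1-O(\varepsilon)}$ retained diagonal lines to find a popular parallel family and then running the additive Balog--Szemer\'edi--Gowers theorem, or by re-examining the configuration after the dilation that renders $A_\times$ geometric-progression-like --- one obtains $A_+\subseteq X$ with $|A_++A_+|\le|A_+|^{1+O(\varepsilon)}$.

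The main obstacle is the final coordination: $A_\times$ and $A_+$ are each only a proportion $N^{-O(\varepsilon)}$ of $X$, so they need not intersect and one cannot merely pass to $A_\times\cap A_+$. The genuine content of the reduction is to carry out the two Balog--Szemer\'edi--Gowers extractions compatibly --- for instance performing the additive one inside the set $A_\times$ already known to have small product set, and exploiting that a line rich in the full grid remains rich in the sub-grid built over $A_\times$ --- so as to land a single set $A$ with $|A|=N^{1/2+O(\varepsilon)}=q^{\alpha/2+O(\varepsilon)}$ and $\max(|A+A|,|A.A|)\le|A|^{1+O(\varepsilon)}$. For $q$ large this $A$ satisfies $q^\delta<|A|<q^{1-\delta}$ with $\delta=\tfrac14\min(\alpha,2-\alpha)$, so once $\varepsilon$ is chosen with $O(\varepsilon)<\epsilon(\delta)$ it contradicts Theorem \ref{t1}. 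This forces $I\le CN^{3/2-\varepsilon}$, proving Theorem \ref{t2} with $\varepsilon=\varepsilon(\alpha)$ a fixed positive multiple of $\epsilon\big(\tfrac14\min(\alpha,2-\alpha)\big)$.
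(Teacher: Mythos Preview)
The paper does not contain a proof of this theorem: Theorem~\ref{t2} is quoted from \cite{bourgain-katz-tao} as background, and the paper's own incidence result is the separate Theorem~\ref{m1}, proved by the spectral/expander-mixing argument and yielding the explicit bound $|P||L|/q + q^{1/2}\sqrt{|P||L|}$. So there is no ``paper's own proof'' of Theorem~\ref{t2} to compare against; your sketch is in effect an attempt to reconstruct the Bourgain--Katz--Tao argument, which is the intended route (Theorem~\ref{t1} $\Rightarrow$ Theorem~\ref{t2}).

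That said, your sketch has a genuine gap, and you name it yourself: the ``final coordination'' of the multiplicative and additive Balog--Szemer\'edi--Gowers extractions. The suggestion to ``perform the additive one inside the set $A_\times$'' does not work as stated, because a diagonal line that meets the full grid $X\times Y$ in $\gtrsim N^{1/2-O(\varepsilon)}$ points need not meet the sub-grid $A_\times\times Y$ (or $A_\times\times A_\times$) in more than zero points --- $A_\times$ occupies only a $N^{-O(\varepsilon)}$ fraction of $X$, and there is no mechanism forcing those rich lines to concentrate over $A_\times$. The actual Bourgain--Katz--Tao reduction handles this differently: after locating the grid, they do \emph{not} separately extract additive and multiplicative BSG pieces and then try to intersect them. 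Instead they find (roughly) a single popular slope class of diagonal lines and a popular point, and read off a set $A$ together with affine images of itself that directly witness both small sumset and small product set simultaneously; the Pl\"unnecke--Ruzsa calculus then transfers these to a single $A$ with $\max(|A+A|,|A.A|)\le |A|^{1+O(\varepsilon)}$. Your energy-and-BSG outline can be made to work, but it requires an asymmetric BSG step (between $X'$ and $Y'$, not inside $X'$ alone) followed by a careful Pl\"unnecke-type argument linking the resulting subset back to the additive structure; as written, that link is missing.

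A smaller issue: the claim that $\sum_\lambda |\lambda X'\cap Y'|^2$ large forces the multiplicative energy of $X'$ to be near-maximal is not quite right --- it forces the \emph{cross}-energy $E_\times(X',Y')$ to be large, which by Cauchy--Schwarz controls $\sqrt{E_\times(X')E_\times(Y')}$ but does not by itself pin down $E_\times(X')$.
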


The relationship between $\varepsilon$ and $\delta$ in Theorem \ref{t1} and the relationship between $\alpha$ and $\varepsilon$ in Theorem \ref{t2}
however are difficult to determine. 

In this paper we shall proceed in an opposite direction. We will first prove a theorem of Szemer\'edi-Trotter type about the number of incidences between points and lines in finite field geometries. We then apply this result to obtain an improved sum-product estimate. Our first result is the following.

\begin{theorem}\label{m1}
  Let $P$ be a collection of points and $L$ be a collection of lines in
  $F^2_q$. Then we have
  \begin{equation}
    |\{(p, l) \in P \times L : p \in l\}| \leqslant \frac{|P\|L|}{q} +
    q^{1/2}\sqrt{|P\|L|} .
  \end{equation}
\end{theorem}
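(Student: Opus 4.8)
The plan is to prove this via spectral graph theory, using the eigenvalue bound for the point-line incidence graph of $F_q^2$. First I would set up the bipartite incidence structure as a regular graph: let the vertex set consist of all points of $F_q^2$ on one side and all lines of $F_q^2$ on the other, with an edge between a point and a line whenever the point lies on the line. More convenient is to work with a suitable Cayley-type or association-scheme graph; the key structural fact is that this incidence configuration is highly regular — every line contains exactly $q$ points, every point lies on exactly $q+1$ lines, two distinct points determine exactly one line, and two distinct lines meet in at most one point. This regularity means the relevant adjacency operator has only a few distinct eigenvalues, with the nontrivial ones of size $O(q^{1/2})$.

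Next I would invoke the standard expander mixing lemma (or its bipartite analogue): if $G$ is a $d$-regular graph on $n$ vertices whose second-largest eigenvalue in absolute value is $\lambda$, then for any two vertex subsets $X, Y$,
\[
  \left| e(X,Y) - \frac{d|X\|Y|}{n} \right| \leqslant \lambda \sqrt{|X\|Y|}.
\]
Applying this with $X = P$, $Y = L$ (after arranging points and lines as vertices of an appropriate $d$-regular graph on $n$ vertices with $d/n = 1/q$ and $\lambda = q^{1/2}$), the number of incident pairs $e(P,L)$ is within $q^{1/2}\sqrt{|P\|L|}$ of $|P\|L|/q$, which gives the claimed upper bound. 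The arithmetic to check is that the main term $d|P\|L|/n$ really equals $|P\|L|/q$ and that the nontrivial eigenvalue is exactly (or at most) $q^{1/2}$.

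The main obstacle — and the real content — is establishing the eigenvalue bound $\lambda \leqslant q^{1/2}$ for the correct graph. One clean route is to define the graph on the affine plane via the Paley-like construction: put an edge between points $(a,b)$ and $(c,d)$ when $(a-c)^2 + (b-d)^2$ is a nonzero square (or some similar quadratic form), realize this as a Cayley graph on $(F_q^2,+)$, and compute its eigenvalues as Gauss sums, which have modulus $q^{1/2}$. Alternatively, one builds the bipartite point-line graph directly and bounds its singular values by counting, for two points, the number of common lines (which is $1$), so that $(AA^T)_{pp'}$ takes only the values $q+1$ (diagonal) and $1$ (off-diagonal); hence $AA^T = qI + J$, whose eigenvalues are $q$ (with multiplicity $q^2-1$) and $q + q^2$, giving singular values $\sqrt{q}$ and $q^{1/2}\sqrt{q+1}$ for $A$. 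I expect this $AA^T = qI + J$ computation to be the heart of the proof; once it is in hand, the singular value decomposition of $A$ immediately yields $|e(P,L) - |P\|L|/q| \leqslant \sqrt{q}\sqrt{|P\|L|}$ after subtracting off the rank-one component corresponding to the all-ones direction, and the theorem follows.
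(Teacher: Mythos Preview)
Your proposal is correct and follows essentially the same route as the paper: compute that the point--line incidence operator satisfies $AA^{T}=qI+J$, read off the nontrivial eigenvalue $\sqrt{q}$, and apply the expander mixing lemma. The only packaging difference is that the paper passes to the projective plane and uses the Erd\H{o}s--R\'enyi polarity graph (a single $(q+1)$-regular graph on $q^{2}+q+1$ vertices with $A^{2}=qI+J$) rather than the bipartite affine incidence graph you describe; your Paley/Cayley-graph alternative is a detour and not needed.
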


In the spirit of Bourgain-Katz-Tao's result, we obtain a reasonably good
estimate when $1 < \alpha < 2$.

\begin{corollary} \label{cor1}
  Let $P$ be a collection of points and $L$ be a collection of lines in
  $F^2_q$. Suppose that $|P|, |L| \leqslant N = q^{\alpha}$ with $1 +
  \varepsilon \leqslant \alpha \leqslant 2 - \varepsilon$ for some
  $\varepsilon > 0$. Then we have
  \begin{equation}
    |\{(p, l) \in P \times L : p \in l\}| \leqslant 2 N^{\frac{3}{2} -
    \frac{\varepsilon}{4}} .
  \end{equation}
\end{corollary}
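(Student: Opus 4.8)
The plan is to substitute the trivial bounds $|P|,|L| \le N = q^{\alpha}$ directly into Theorem~\ref{m1} and then optimize exponents against the constraint $1 + \varepsilon \le \alpha \le 2 - \varepsilon$. Since $|P\|L| \le N^2 = q^{2\alpha}$ and $q = N^{1/\alpha}$, Theorem~\ref{m1} yields
\[
  |\{(p,l) \in P \times L : p \in l\}| \le \frac{N^2}{q} + q^{1/2}\,N = N^{2 - 1/\alpha} + N^{1 + 1/(2\alpha)} .
\]
It therefore suffices to show that each of the two exponents $2 - \tfrac1\alpha$ and $1 + \tfrac1{2\alpha}$ is at most $\tfrac32 - \tfrac{\varepsilon}{4}$ on the whole range $1 + \varepsilon \le \alpha \le 2 - \varepsilon$; adding the two resulting terms then produces the factor $2$.

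For the first exponent I would use only the upper bound $\alpha \le 2 - \varepsilon$, which gives $\tfrac1\alpha \ge \tfrac1{2-\varepsilon}$. A short computation shows $\tfrac1{2-\varepsilon} - \tfrac12 = \tfrac{\varepsilon}{2(2-\varepsilon)} \ge \tfrac{\varepsilon}{4}$, the last step because $2 - \varepsilon \le 2$; hence $2 - \tfrac1\alpha \le \tfrac32 - \tfrac{\varepsilon}{4}$. For the second exponent I would use only the lower bound $\alpha \ge 1 + \varepsilon$, so $\tfrac1{2\alpha} \le \tfrac1{2(1+\varepsilon)}$, and it remains to check $\tfrac1{2(1+\varepsilon)} \le \tfrac12 - \tfrac{\varepsilon}{4}$; clearing denominators, this reduces to $\varepsilon(1-\varepsilon) \ge 0$. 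Since the two hypotheses together force $1 + \varepsilon \le 2 - \varepsilon$, i.e. $\varepsilon \le \tfrac12$, this inequality holds, giving $1 + \tfrac1{2\alpha} \le \tfrac32 - \tfrac{\varepsilon}{4}$ as well.

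Combining the two estimates gives the stated bound $2 N^{3/2 - \varepsilon/4}$. I do not expect a real obstacle here, since the argument is just elementary exponent bookkeeping applied to Theorem~\ref{m1}; the only point that needs genuine care is that both exponent inequalities must be verified uniformly over the admissible interval for $\alpha$, and in particular one should observe that the hypotheses already confine $\varepsilon$ to $(0, \tfrac12]$, which is exactly what makes the bound on the second term go through.
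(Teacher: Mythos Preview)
Your proposal is correct and follows essentially the same route as the paper: bound $|P||L|$ by $N^2$, write $q = N^{1/\alpha}$, and then use $\alpha \le 2-\varepsilon$ to control the term $N^2/q$ and $\alpha \ge 1+\varepsilon$ to control the term $q^{1/2}N$, each by $N^{3/2-\varepsilon/4}$. The paper states these two exponent inequalities without the intermediate algebra; your version simply fills in those details (including the observation that the hypotheses force $\varepsilon \le \tfrac12$, which is what makes the second inequality go through).
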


We shall use the incidence bound in Theorem \ref{m1} to obtain an improved sum-product
estimate. 

\begin{theorem}\label{m2} (sum-product estimate)
  Let $A \subset F_q$ with $q$ is an odd prime power. Suppose that
  \[ |A + A| = m, |A.A| = n. \]
  Then
  \[ |A|^2 \leqslant \frac{m n|A|}{q} + q^{1/2}\sqrt{m n} . \]
  In particular, we have
  \begin{equation}
    \max (|A + A|, |A.A|) \geqslant \frac{2| A|^2}{q^{1 / 2} + \sqrt{q +
    \frac{4| A|^3}{q}}} .
  \end{equation}
\end{theorem}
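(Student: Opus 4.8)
The plan is to run the Elekes‐type point–line incidence argument in $F_q^2$ and then invoke Theorem \ref{m1}. Set $P = (A+A)\times(A.A)$, a Cartesian product, so that $|P| = mn$. For each pair $(a,b)$ with $a \in A\setminus\{0\}$ and $b \in A$, let $\ell_{a,b}$ be the line $\{(x,y)\in F_q^2 : y = a(x-b)\}$, and let $L$ be the collection of all these lines. Since the slope $a$ and the intercept $-ab$ determine $(a,b)$ when $a\neq 0$, the lines $\ell_{a,b}$ are pairwise distinct, so $|L| = |A\setminus\{0\}|\cdot|A|$. The crux of the whole proof is exactly this choice of configuration: $|L|$ is essentially $|A|^2$, each line will carry at least $|A|$ points of $P$, and $|P|=mn$, so playing the trivial per‑line count against Theorem \ref{m1} forces $m$ and $n$ to be large.

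Next I would bound the number of incidences from below. Fix $(a,b)$. For every $c\in A$ the point $(b+c,\,ac)$ lies on $\ell_{a,b}$, and it belongs to $P$ because $b+c\in A+A$ and $ac\in A.A$; moreover, as $c$ ranges over $A$ the first coordinate $b+c$ takes $|A|$ distinct values, so $\ell_{a,b}$ contains at least $|A|$ points of $P$. Writing $I = |\{(p,\ell)\in P\times L : p\in\ell\}|$, this gives $I \ge |A|\,|L|$. On the other hand, Theorem \ref{m1} gives $I \le \frac{|P|\,|L|}{q} + q^{1/2}\sqrt{|P|\,|L|}$. Comparing the two bounds, cancelling $|L|$, and (when $0\notin A$, so that $|L|=|A|^2$) cancelling one further factor $|A|$, we reach
\[
  |A|^2 \;\le\; \frac{mn\,|A|}{q} \;+\; q^{1/2}\sqrt{mn},
\]
which is the first assertion. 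If $0\in A$ I would simply apply the same reasoning to $A\setminus\{0\}$, whose sum set and product set sit inside $A+A$ and $A.A$, and absorb the resulting $O(1)$ discrepancy; this is the only technical nuisance in the argument and it costs nothing essential.

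The "in particular" statement is then pure algebra. Put $t=\sqrt{mn}\ge 0$ and rewrite the displayed inequality as $\frac{|A|}{q}\,t^2 + q^{1/2}\,t - |A|^2 \ge 0$. Solving this quadratic for $t$ yields $t \ge \frac{-q^{1/2}+\sqrt{q + 4|A|^3/q}}{2|A|/q}$, and rationalizing the numerator rewrites the right‑hand side as $\frac{2|A|^2}{q^{1/2}+\sqrt{q+4|A|^3/q}}$. Since $mn\le\max(m,n)^2$, we have $\sqrt{mn}\le\max(m,n)=\max(|A+A|,|A.A|)$, and the claimed lower bound on $\max(|A+A|,|A.A|)$ follows at once.

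In short, once Theorem \ref{m1} is available the proof is genuinely short, and the only place where any thought is needed is in setting up $P$ and $L$ so that the easy lower bound $I\ge|A|\,|L|$ and the hard upper bound of Theorem \ref{m1} are in the right relation; the degenerate case $0\in A$ is the sole bookkeeping point.
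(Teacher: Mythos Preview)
Your argument is correct and is essentially the paper's own proof: the paper likewise takes $P=(A+A)\times(A.A)$ and the Elekes family of lines $y=b(x-a)$ to get at least $|A|^3$ incidences, feeds this into Theorem~\ref{m1}, divides through by $|A|$, and then solves the resulting quadratic in $x=\max(m,n)$ (equivalent to your quadratic in $t=\sqrt{mn}$ via $\sqrt{mn}\le x$). You are in fact slightly more careful than the paper, which silently assumes the $|A|^2$ lines are distinct and does not discuss the $0\in A$ case at all.
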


In analogy with the statement of Corollary \ref{cor1} above, we note the following consequence of Theorem \ref{m2}.

\begin{corollary} \label{cor2}
Let $A \subset F_q$ with $q$ is an odd prime power. 
\begin{enumerate}
	\item Suppose that $q^{1/2} \ll |A| \leqslant q^{2/3}$. Then
	\[ \max (|A + A|, |A.A|) \geqslant c\frac{|A|^2}{q^{1/2}}.\]
	\item Suppose that $q^{2/3} \leqslant |A| \ll q$. Then 
	\[ \max (|A + A|, |A.A|) \geqslant c(q|A|)^{1/2}. \]
\end{enumerate}
\end{corollary}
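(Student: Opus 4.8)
The plan is to deduce both parts of Corollary \ref{cor2} directly from the explicit lower bound
\[ \max(|A+A|,|A.A|) \;\geqslant\; \frac{2|A|^2}{q^{1/2} + \sqrt{q + \tfrac{4|A|^3}{q}}} \]
furnished by Theorem \ref{m2}, simply by estimating the denominator $D := q^{1/2} + \sqrt{q + 4|A|^3/q}$ in each of the two stated ranges of $|A|$. The one thing worth noticing in advance is that the threshold $|A| = q^{2/3}$ is exactly the point at which the two terms under the square root trade dominance: $|A| \leqslant q^{2/3}$ is equivalent to $|A|^3 \leqslant q^2$, i.e.\ to $|A|^3/q \leqslant q$, and the reverse inequality holds for $|A| \geqslant q^{2/3}$. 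So the case split in the corollary is the natural one, and in each case the dominant term of $D$ can be read off.

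For the first part, assume $q^{1/2} \ll |A| \leqslant q^{2/3}$. Then $4|A|^3/q \leqslant 4q$, so $q + 4|A|^3/q \leqslant 5q$ and $\sqrt{q + 4|A|^3/q} \leqslant \sqrt{5}\,q^{1/2}$; hence $D \leqslant (1+\sqrt{5})\,q^{1/2}$. Substituting this into the bound above gives $\max(|A+A|,|A.A|) \geqslant \frac{2}{1+\sqrt5}\cdot\frac{|A|^2}{q^{1/2}}$, which is the asserted estimate with $c = 2/(1+\sqrt5)$. For the second part, assume $q^{2/3} \leqslant |A| \ll q$. Now $q \leqslant |A|^3/q$, so $q + 4|A|^3/q \leqslant 5|A|^3/q$ and also $q^{1/2} \leqslant |A|^{3/2} q^{-1/2}$, whence $D \leqslant (1+\sqrt5)\,|A|^{3/2}q^{-1/2}$. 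Plugging this in yields $\max(|A+A|,|A.A|) \geqslant \frac{2}{1+\sqrt5}\,|A|^{1/2}q^{1/2} = \frac{2}{1+\sqrt5}\,(q|A|)^{1/2}$, as claimed.

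It is worth remarking that the hypotheses $q^{1/2}\ll|A|$ in the first part and $|A|\ll q$ in the second are not actually used in the computations above: the displayed inequalities hold for all $A$. Those conditions are recorded only to ensure the conclusions are non-trivial, i.e.\ that $|A|^2/q^{1/2}$ (resp.\ $(q|A|)^{1/2}$) genuinely exceeds the trivial bound $|A|$, which happens precisely when $|A|\gg q^{1/2}$ (resp.\ $|A|\ll q$). There is no real obstacle here; the only thing needing mild care is the bookkeeping of the absolute constant $2/(1+\sqrt5)$ and checking that the two regimes agree at the boundary $|A|=q^{2/3}$, where both bounds reduce to a constant times $q^{5/6}$.
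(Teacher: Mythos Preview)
Your proof is correct and follows exactly the same approach as the paper: deduce both parts from the lower bound in Theorem~\ref{m2} by showing that the denominator $q^{1/2}+\sqrt{q+4|A|^3/q}$ is $O(q^{1/2})$ when $|A|\leqslant q^{2/3}$ and $O(|A|^{3/2}q^{-1/2})$ when $|A|\geqslant q^{2/3}$. Your version is in fact more explicit, supplying the constant $c=2/(1+\sqrt5)$ and correctly noting that the extra hypotheses $q^{1/2}\ll|A|$ and $|A|\ll q$ serve only to make the conclusions non-trivial.
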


Note that, the bound in Theorem \ref{m2} is stronger than ones established in
Theorem 1.1 in \cite{hart-iosevich-solymosi}. 

We also call the reader's attention to the fact that the application of the spectral
method from graph theory in sum-product estimates was
independently used by Vu in \cite{van}. The bound in Corollary \ref{cor2} is stronger than ones in Remark 1.4 from \cite{van} (which is also implicit from Theorem 1.1 in \cite{hart-iosevich-solymosi}).

\section{Incidences: Proofs of Theorem \ref{m1} and Corollary \ref{cor1}}
  We can embed the space
  $F_q^2$ into $P F_q^3$ by identifying $(x,y)$ with the equivalence
  class of $(x,y, 1)$. Any line in $F_q^2$ also can be
  represented uniquely as an equivalence class in $P F_q^3$ of some non-zero element $h \in
  F_q^3$. For each $x \in F_q^3$, we denote $[x]$ the equivalence class of $x$ in $GF_q^3$. Let $G_q$ denote the graph whose vertices are the points of $P F_q^3$, where two vertices $[x]$ and $[y]$ are connected if and only if
  \[\left\langle x, y \right\rangle = x_1 y_1 + x_2 y_2 + x_3 y_3 =
  0.\] That is the points represented by $[x]$ and $[y]$ lie on the lines
  represented by $[y]$ and $[x]$, respectively.
  
  It is well-known that $G_q$ has $n = q^2+q+1$
  vertices and $G_q$ is a $(q+1)$-regular graph. Since the equation $x_1^2+x_2^2+x_3^2 = 0$ over $F_q$ has exactly $q^2-1$ non-zero solutions so the number of vertices of $G$
  with loops is $d = q+1$. The eigenvalues of $G$ are easy to compute. Let $A$ be the adjacency matrix of $G$. Since two lines in $PF_q^3$ intersect at exactly one point, we have
  $A^2 = A A^T = J+ (d-1)I = J + q I $ where $J$ is the $n \times n$ all $1$-s matrix and $I$ is the $n \times n$ identity matrix. Thus the largest eigenvalue of $A^2$ is $d^2$ and all other eigenvalues are $d-1 = q$. This implies that all but the largest eigenvalues of $G_q$ are $\sqrt{q}$.
  
  It is well-known that if a $k$-regular graph on $n$ vertices with the absolute value of each of its eigenvalues but the largest one is at most $\lambda$ and if $\lambda \ll d$ then this graph behaves similarly as a random graph $G_{n,k/n}$. Presicely, we have the following result (see Corollary~9.2.5 in \cite{alon-spencer}).
  
\begin{lemma} \label{expander}
    Let $G$ be a $k$-regular graph on $n$ vertices (with loops allowed).
    Suppose that all eigenvalues of $G$ except the largest one are at most
    $\lambda$. Then for every set of vertices $B$ and $C$ of $G$, we have
    \begin{equation} \label{eb}
      |e (B, C) - \frac{k}{n} |B||C\| \leqslant \lambda \sqrt{|B||C|},
    \end{equation}
    where $e (B, C)$ is the number of ordered pairs $(u, v)$ where $u \in B, v
    \in C$ and $u v$ is an edge of $G$.
  \end{lemma}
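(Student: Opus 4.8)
The plan is to prove Lemma~\ref{expander} by the standard eigenvalue-decomposition (``spectral'') argument. Let $M$ be the adjacency matrix of $G$. Since $G$ is undirected (loops contributing symmetrically), $M$ is a real symmetric $n \times n$ matrix, hence has an orthonormal basis of eigenvectors $v_1, \dots, v_n$ with real eigenvalues $\lambda_1 \geq \lambda_2 \geq \cdots \geq \lambda_n$. Because $G$ is $k$-regular, every row sum of $M$ equals $k$, so $\mathbf{1}$ (the all-ones vector) is an eigenvector with eigenvalue $k$; by Gershgorin (or Perron--Frobenius) $k$ is the largest eigenvalue, so we may take $\lambda_1 = k$ and $v_1 = \mathbf{1}/\sqrt{n}$. (If $\lambda < k$ then $\lambda_1 = k$ is automatically simple, since otherwise $\lambda_2 = k > \lambda$ would contradict the hypothesis.) By assumption $|\lambda_i| \leq \lambda$ for every $i \geq 2$.

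Next I would expand the characteristic vectors $\mathbf{1}_B$ and $\mathbf{1}_C$ of the sets $B,C$ in this eigenbasis: $\mathbf{1}_B = \sum_{i=1}^n \alpha_i v_i$ and $\mathbf{1}_C = \sum_{i=1}^n \beta_i v_i$. Two elementary observations: first, $e(B,C) = \mathbf{1}_B^{T} M\, \mathbf{1}_C = \sum_{i=1}^n \lambda_i \alpha_i \beta_i$, using orthonormality of the $v_i$; second, $\alpha_1 = \langle \mathbf{1}_B, v_1 \rangle = |B|/\sqrt{n}$ and likewise $\beta_1 = |C|/\sqrt{n}$. Separating the top term gives
\[
e(B,C) = \frac{k}{n}|B||C| + \sum_{i=2}^n \lambda_i \alpha_i \beta_i .
\]

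Finally I would estimate the error term. Using $|\lambda_i| \leq \lambda$ for $i \geq 2$, the triangle inequality, and Cauchy--Schwarz,
\[
\Bigl| \sum_{i=2}^n \lambda_i \alpha_i \beta_i \Bigr|
\leq \lambda \sum_{i=2}^n |\alpha_i|\,|\beta_i|
\leq \lambda \Bigl( \sum_{i=2}^n \alpha_i^2 \Bigr)^{1/2} \Bigl( \sum_{i=2}^n \beta_i^2 \Bigr)^{1/2}
\leq \lambda\, \| \mathbf{1}_B \|_2\, \| \mathbf{1}_C \|_2 = \lambda \sqrt{|B||C|},
\]
since $\sum_i \alpha_i^2 = \| \mathbf{1}_B \|_2^2 = |B|$ and similarly for $C$. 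Combining this with the previous display yields \eqref{eb}.

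The argument is entirely routine, so the only points demanding care are bookkeeping ones: verifying that with loops allowed $M$ is still symmetric (so that a real orthonormal eigenbasis exists and $\mathbf{1}$ is genuinely the extremal eigenvector), and recognizing that the phrase ``all eigenvalues except the largest are at most $\lambda$'' must be used in the form $|\lambda_i| \le \lambda$ for $i \ge 2$ for the Cauchy--Schwarz step to close. In the application to the graph $G_q$ this is exactly what holds, the non-trivial eigenvalues being $\pm\sqrt{q}$, so $\lambda = \sqrt{q}$ and $k = q+1$.
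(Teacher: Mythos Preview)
The paper does not actually prove this lemma; it simply quotes it as Corollary~9.2.5 of Alon--Spencer~\cite{alon-spencer}. Your argument is the standard spectral proof of the expander mixing lemma---essentially the one given in that reference---and it is correct. Your remark that the hypothesis ``all eigenvalues except the largest are at most $\lambda$'' must be read as a bound on $|\lambda_i|$ for $i\ge 2$ is well taken; in the paper's only application (to $G_q$) the nontrivial eigenvalues are $\pm\sqrt{q}$, so this reading is the intended one.
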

  
Let $B$ be the set of vertices of $G$ that represent the collection $P$ of
  points in $F_q^2$ and $C$ be the set of vertices of $G$ that represent the
  collection $L$ of lines in $F_q^2$. From (\ref{eb}), we have
  \begin{eqnarray*}
    |\{(p, h) \in P \times L : p \in h\}| & = & e (B, C)\\
    & \leqslant & \frac{q+1}{q^2+q+1} |B\|C| + \lambda \sqrt{|B\|C|}\\
    & \leqslant & \frac{|P\|L|}{q} + q^{1/ 2} \sqrt{|P\|L|}.
  \end{eqnarray*}
  This concludes the proof of Theorem \ref{m1}. 
  
If $\alpha \leqslant 2 - \varepsilon$ then
  \begin{equation} \label{a1}
    \frac{|P\|L|}{q} \leqslant \frac{N^2}{q} \leqslant N^{\frac{3}{2} -
    \frac{\varepsilon}{4}} .
  \end{equation}
  
If $\alpha \geqslant 1 + \varepsilon$ then
  \begin{equation} \label{a2}
    q^{1 / 2} \sqrt{|P\|L|} \leqslant q^{1 / 2} N \leqslant N^{\frac{3}{2} -
    \frac{\varepsilon}{4}} .
  \end{equation}
Corollary \ref{cor1} are immediate from (\ref{a1}), (\ref{a2}) and Theorem \ref{m1}.

Note that we also have an analog of Theorem \ref{m1} in higher dimension.

\begin{theorem}\label{m3}
  Let $P$ be a collection of points in $F_q^d$ and $H$ be a collection of
  hyperplanes in $F_q^d$ with $d \geqslant 2$. Then we have
  \begin{equation}
    |\{(p, h) \in P \times H : p \in h\}| \leqslant \frac{|P\|L|}{q} + q^{(d -
    1) / 2} (1+o(1)) \sqrt{|P\|L|} .
  \end{equation}
\end{theorem}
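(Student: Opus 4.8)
The plan is to repeat, one dimension higher, the construction behind Theorem~\ref{m1}. Embed $F_q^d$ into $PF_q^{d+1}$ by identifying $(x_1,\dots,x_d)$ with the class of $(x_1,\dots,x_d,1)$, and identify each affine hyperplane $a_1y_1+\dots+a_dy_d=b$ with the class of $(a_1,\dots,a_d,-b)$. Let $G$ be the graph on the $n=(q^{d+1}-1)/(q-1)$ projective points of $PF_q^{d+1}$ in which $[x]$ and $[y]$ are joined precisely when $\langle x,y\rangle = x_1y_1+\dots+x_{d+1}y_{d+1}=0$; exactly as in the planar case, the edge $[x][y]$ records the incidence ``the point $[x]$ lies on the hyperplane $[y]$'' (a loop at $[x]$ recording a genuine self-incidence). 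Since the points on a fixed projective hyperplane number $(q^d-1)/(q-1)$, the graph $G$ is $k$-regular with $k=(q^d-1)/(q-1)$.

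The heart of the matter is the spectrum of the adjacency matrix $A$ of $G$. The incidence relation is symmetric, so $A=A^T$ and $(A^2)_{[x][y]}$ counts the common neighbours of $[x]$ and $[y]$, i.e. the projective points lying on both hyperplanes $\langle x,\cdot\rangle=0$ and $\langle y,\cdot\rangle=0$. For $[x]\neq[y]$ these meet in a projective subspace of dimension $d-2$, which has $(q^{d-1}-1)/(q-1)$ points; for $[x]=[y]$ the count is the degree $k$. Using $k-(q^{d-1}-1)/(q-1)=q^{d-1}$, this yields
\[
A^2 \;=\; \frac{q^{d-1}-1}{q-1}\,J + q^{d-1}\,I ,
\]
the exact analogue of the identity $A^2=J+qI$ used for Theorem~\ref{m1}. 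Since $J$ has eigenvalue $n$ on the all-ones vector and $0$ on its orthogonal complement, $A^2-q^{d-1}I$ has rank one, and the arithmetic identity
\[
\left(\frac{q^d-1}{q-1}\right)^{\!2} \;=\; \frac{q^{d-1}-1}{q-1}\cdot\frac{q^{d+1}-1}{q-1} + q^{d-1}
\]
(the higher-dimensional form of $(q+1)^2=(q^2+q+1)+q$) identifies the top eigenvalue of $A^2$ as $k^2$. Hence the largest eigenvalue of $A$ is $k$ and every other eigenvalue is $\pm q^{(d-1)/2}$, so Lemma~\ref{expander} applies with $\lambda=q^{(d-1)/2}$.

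Now take $B$ to be the set of projective points representing $P$ and $C$ the set representing $H$. Lemma~\ref{expander} gives
\[
|\{(p,h)\in P\times H : p\in h\}| \;=\; e(B,C) \;\leqslant\; \frac{k}{n}\,|P\|H| + q^{(d-1)/2}\sqrt{|P\|H|},
\]
and since $k/n=(q^d-1)/(q^{d+1}-1)<1/q$ the right-hand side is at most $|P\|H|/q + q^{(d-1)/2}\sqrt{|P\|H|}$; this is the bound of Theorem~\ref{m3} (in fact with the $(1+o(1))$ factor removed).

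I expect the only step requiring real care to be the eigenvalue computation: verifying that two distinct hyperplanes of $PF_q^{d+1}$ meet in exactly $(q^{d-1}-1)/(q-1)$ points, and checking the displayed arithmetic identity that forces every non-principal eigenvalue of $A$ down to $\pm q^{(d-1)/2}$. Everything else transcribes the $d=2$ argument verbatim. One should also note, as in dimension two, that a single vertex of $G$ may represent both an affine point and an affine hyperplane, so $B$ and $C$ need not be disjoint; this is harmless, since $e(B,C)$ in Lemma~\ref{expander} counts each genuine incidence exactly once.
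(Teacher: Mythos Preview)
Your argument is correct and is precisely the higher-dimensional transcription of the proof of Theorem~\ref{m1} that the paper has in mind (the paper itself omits the proof, merely remarking that it is ``similar to the proof of Theorem~\ref{m1}'' and pointing to the Alon--Krivelevich and Hart--Iosevich--Koh--Rudnev arguments). Your computation $A^2=\frac{q^{d-1}-1}{q-1}J+q^{d-1}I$ and the resulting eigenvalue bound $\lambda=q^{(d-1)/2}$ are exactly right, and since $k/n<1/q$ you in fact obtain the inequality without the $(1+o(1))$ factor.
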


  The proof of this theorem is similar to the proof of Theorem \ref{m1} and is left for the readers. Note that the analog of Theorem 	  \ref{m3} for the case $P \equiv L$ (in $PF_q^d$) are obtained by Alon and Krivelevich (\cite{alon-krivelevich}) via a similar approach and by Hart, Iosevich, Koh and Rudnev (\cite{hart-iosevich-koh-rudnev}) via Fourier analysis. By modifying the proofs of Theorem 2.1 in \cite{hart-iosevich-koh-rudnev} and Lemma 2.2 in \cite{alon-krivelevich} slightly, we obtain Theorem \ref{m3}.

\section{Sum-product estimates: Proofs of Theorem \ref{m2} and Corollary \ref{cor2}}

Elekes (\cite{elekes}) observed that there is a connection between the incidence problem
and the sum-product problem. The statement and the proof here follow the presentation in \cite{bourgain-katz-tao}.

\begin{lemma} (\cite{elekes}) \label{incidence}
  Let $A$ be a subset of $F_q$. Then there is a collection of points $P$
  and lines $L$ with $|P| = |A + A||A.A|$ and $|L| = |A|^2$ which has at least
  $|A|^3$ incidences.
\end{lemma}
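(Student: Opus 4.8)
The plan is to reproduce the classical point--line construction of Elekes. I would take the point set to be the grid
\[ P \assign (A + A) \times (A.A) \subseteq F_q^2 , \]
for which $|P| = |A+A|\,|A.A|$ holds by definition, and for the lines I would associate to each ordered pair $(a,b) \in A \times A$ the line
\[ \ell_{a,b} \assign \{ (x,y) \in F_q^2 : y = a(x-b) \} , \]
letting $L$ be the collection of all these lines.

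The key step is to check that each $\ell_{a,b}$ contains at least $|A|$ points of $P$. For every $c \in A$ we have $ac = a\big((b+c)-b\big)$, so the point $(b+c,\,ac)$ lies on $\ell_{a,b}$; since $b+c \in A + A$ and $ac \in A.A$, this point lies in $P$. As $c$ ranges over $A$ the $x$-coordinates $b+c$ are pairwise distinct, so we obtain $|A|$ distinct points of $P$ on $\ell_{a,b}$. Summing this over all $|A|^2$ pairs $(a,b)$ produces at least $|A|^2\cdot|A| = |A|^3$ incidences between $P$ and $L$.

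It then remains only to confirm that $|L| = |A|^2$, i.e.\ that distinct pairs $(a,b)$ give distinct lines. Since $\ell_{a,b}$ has slope $a$ and $y$-intercept $-ab$, the map $(a,b)\mapsto \ell_{a,b}$ is injective whenever $a\neq 0$, so no collapse occurs unless $0\in A$. That degenerate case is the only delicate point, and it is mild: one may either replace $A$ by $A\setminus\{0\}$ (changing $|A|$ by one, hence affecting none of the consequences later drawn from the lemma), or pad $L$ with $|A|-1$ further arbitrary lines to restore $|L|=|A|^2$, which cannot decrease the incidence count. This yields the asserted cardinalities together with the lower bound of $|A|^3$ incidences, completing the proof.
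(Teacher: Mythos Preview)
Your construction is exactly Elekes' construction as presented in the paper (up to swapping the roles of $a$ and $b$ in the parametrization of the lines), so the approach is the same. Your treatment is in fact more careful than the paper's: you explicitly address the degenerate case $0\in A$, which the paper's one-line proof glosses over, and your observation that padding $L$ or excising $0$ harmlessly restores the exact count $|L|=|A|^2$ is correct and adequate for the lemma's subsequent use.
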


\begin{proof} Take $P = (A + A)\times (A.A)$, and let $L$ be the collection of all lines of form
$l(a,b) := \{(x,y): y = b(x-a)\}$ where $a, b \in A$. The claim follows since $(a+c,bc) \in P$ is incident to $l(a,b)$ whenever $a,b,c \in A$.

\end{proof}

Theorem \ref{m2} follows from Theorem \ref{m1} and Lemma \ref{incidence}.

\begin{proof}
  (of Theorem \ref{m2}) Let $P$ and $L$ be collections of points and lines as in the proof of
  Lemma \ref{expander}. Then from Theorem \ref{m1}, we have
  \[
    |A|^3 \leqslant \frac{m n|A|^2}{q} + q^{1 / 2} |A| \sqrt{m n} .
  \]
  This implies that
  \begin{equation}
    |A|^2 \leqslant \frac{m n|A|}{q} + q^{1 / 2} \sqrt{m n} .
  \end{equation}
  Let $x = \max (|A + A|, |A.A|)$, we have
  \[ |A|x^2 + q^{3 / 2} x - q|A|^2 \geqslant 0. \]
  Solving this inequality gives us the desired lower bound for $x$, concluding the proof
  of the theorem.
\end{proof}

If $q^{1/2} \ll |A| \leqslant q^{2/3}$. Then
	\begin{equation}\label{a3} q^{1/2} + \sqrt{q+\frac{4|A|^3}{q}} = O(q^{1/2}).\end{equation}
	
If $q^{2/3} \leqslant |A| \ll q$. Then 
	\begin{equation}\label{a4} q^{1/2} + \sqrt{q+\frac{4|A|^3}{q}} = O(\sqrt{|A|^3/q}). \end{equation}

Corollary \ref{cor2} is immediate from (\ref{a3}), (\ref{a4}) and  Theorem \ref{m2}.

\nocite{*}

\end{document}